\numberwithin{equation}{section}
\newtheorem{lemma}{Lemma}
\newtheorem*{thm}{Theorem}
\theoremstyle{remark}
\newtheorem{rem}{Remark}
\newtheorem*{ack}{Acknowledgment}
\def\R{\mathbb{R}}
\def\N{\mathbb{N}}
\def\D{\mathbb{D}}
\def\C{\mathbb{C}}
\def\dist{\operatorname{dist}}
\def\re{\operatorname{Re}}
\begin{document}
\title[No fixed points and  no Baker domains]{An entire function with no fixed points and
no invariant Baker domains}
\author{Walter Bergweiler}
\thanks{Supported by the ESF Research Networking Programme HCAA}
\address{Mathematisches Seminar,
Christian--Albrechts--Universit\"at zu Kiel,
Lude\-wig--Meyn--Str.~4,
D--24098 Kiel,
Germany}
\email{bergweiler@math.uni-kiel.de}
%\date{\today .  Preliminary version}
\subjclass[2010]{Primary 37F10;  Secondary 30D05, 49M15, 65H05}
\begin{abstract}
We show that there exists an entire function which has neither
fixed points nor invariant Baker domains. The question whether
such a function exists was raised by Buff.
\end{abstract}
\maketitle
\section{Introduction and result} \label{intro}
Let $f$ be a meromorphic function and denote by $f^n$ the $n$-th iterate
of~$f$. An invariant component $U$ of the Fatou set of $f$ such that
$f^n|_U\to\infty$ as $n\to\infty$ is called an \emph{invariant Baker
domain}; cf.~\cite[\S 4.7]{Ber93} or~\cite{Rip}. It was suggested by 
Douady that invariant Baker domains of the Newton function $f(z)=z-g(z)/g'(z)$
of an entire function $g$ are related to paths where $g$  
tends to the asymptotic value~$0$.
In response to Douady's question it was shown 
in~\cite{BufRuc} that under mild additional hypotheses
the existence of an invariant Baker domain does indeed imply
that $0$ is an asymptotic value of~$g$.
However, this is not always the case~\cite{BerDraLan}.

If $g$ has no zeros at all, then the Newton function $f$ has no fixed points.
Moreover, $0$ is an asymptotic value of $g$ by Iversen's theorem~\cite[p.~289]{Nev}.
This led Buff to ask whether there exists an entire function having no
fixed points and no invariant Baker domains.  We show that such a function exists.
\begin{thm}
There exists an entire function with no fixed points and no invariant Baker domains.
\end{thm}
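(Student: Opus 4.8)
The plan rests on the trivial observation that an entire function has no fixed points precisely when $f(z)-z$ is zero-free, i.e.\ precisely when $f(z)=z-e^{\mu(z)}$ for some entire function $\mu$. Thus "no fixed points" is automatic for functions of this form, and everything reduces to choosing $\mu$ so that $f=z-e^{\mu}$ has no invariant Baker domain. The difficulty is the very phenomenon highlighted in the introduction: since $-e^{\mu}$ is zero-free, Iversen's theorem forces $e^{\mu}\to 0$, hence $f\to\mathrm{id}$, along some path to $\infty$, and it is exactly this near-identity behaviour that produces the invariant Baker domains of the standard fixed-point-free examples such as $z-e^{-z}$. So $\mu$ must be chosen so that the unavoidable near-identity behaviour at $\infty$ is squeezed into a set too thin to support a Baker domain.

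I would therefore fix a thin channel $\Gamma$ running to $\infty$ along $\mathbb{R}_{+}$, with $\mathbb{R}_{+}\subseteq\Gamma$ so that $\log z$ is single-valued and holomorphic on $E=\mathbb{C}\setminus\Gamma$, and, using a Carleman-type approximation theorem, construct an entire $\mu$ with
\begin{equation*}
|\mu(z)-\log z|\le\frac{1}{1+|z|}\quad(z\in E),\qquad\operatorname{Re}\mu(z)\ge|z|\quad(z\in\Gamma'),
\end{equation*}
where $\Gamma'\subseteq\Gamma$ is a slightly thinner sub-channel and the transition collar $\Gamma\setminus\Gamma'$ is taken so thin that, although $\mu$ is not prescribed there, it stays tightly controlled by its boundary values. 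On $E$ this gives $e^{\mu(z)}=z+O(1)$, hence $f(z)=z-e^{\mu(z)}=O(1)$ there, while on $\Gamma'$ one has $|e^{\mu(z)}|\ge e^{|z|}$.

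The purpose of this is to force $f$ into the Eremenko--Lyubich class of entire functions with a bounded set of singular values. Indeed $f$ has bounded finite asymptotic values, since $f$ is bounded on $E$ while $|f(z)|\to\infty$ in $\Gamma'$, and the only part of $\Gamma$ where $f$ could stay bounded is where $\mu\approx\log z$, forcing $f\approx 0$ there as well. And the critical values are bounded: critical points solve $(e^{\mu})'(z)=1$, and wherever $\operatorname{Re}\mu(z)$ is appreciably larger than $\log|z|$ one has $|(e^{\mu})'(z)|=|\mu'(z)|\,e^{\operatorname{Re}\mu(z)}\gg 1$, so no critical point lies there, while at the remaining critical points $\mu(z)\approx\log z$, whence $f(z)=z-e^{\mu(z)}\approx 0$. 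Granting this, $f$ lies in the Eremenko--Lyubich class, so by the theorem of Eremenko and Lyubich every escaping point of $f$ lies in its Julia set; since every point of a Baker domain escapes, $f$ has no Baker domains at all, in particular no invariant one, which with the absence of fixed points proves the theorem. (Alternatively one can argue dynamically: an orbit $z_{n}=f^{n}(z_{0})\to\infty$ in a putative invariant Baker domain $U$ must, since $z_{n}\in E$ forces $|z_{n+1}|=O(1)$, eventually stay in $\Gamma$, where the jumps $|z_{n+1}-z_{n}|=|e^{\mu(z_{n})}|$ are so large relative to the thinness of the channel that the Schwarz--Pick bound $d_{U}(z_{n},z_{n+1})\le d_{U}(z_{0},z_{1})$ cannot hold.)

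The step I expect to be hardest is the construction of $\mu$ itself: a single entire function must carry the two incompatible asymptotics --- close to $\log z$ on the large set $E$, real part at least $|z|$ on $\Gamma'$ --- with the transition confined to a collar thin enough and controlled enough that the infinitely many critical values really stay bounded; this needs a careful choice of the channel $\Gamma$ and careful bookkeeping in the approximation theorem. Verifying, along the way, that no critical point of $f$ hides in the badly controlled part of the collar, or in the region where $\operatorname{Re}\mu$ is large (one should arrange $\mu'$ to be zero-free there), is where the real care lies.
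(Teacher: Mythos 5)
Your approach is genuinely different from the paper's: you aim to place $f$ in the Eremenko--Lyubich class $\mathcal{B}$ so that Baker domains of \emph{every} period are excluded at once, whereas the paper builds an explicit $f(z)=z+e^{h(z)}$ (with $h$ an infinite product with prescribed zeros on circles $|z|=r_k$) and rules out invariant Baker domains directly by a two-step hyperbolic-metric argument: first one shows that $\partial U$ must approach within $O(r_k/n_k)$ of any point of $U\cap\{|z|=r_k\}$, by comparing a zero of $h$ with a nearby point where $\re h$ is large; then the closeness of $\partial U$ forces $\rho_U(z_k,f(z_k))\to\infty$, contradicting the Schwarz--Pick bound along an invariant curve. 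The paper's $f$ is not in $\mathcal{B}$ and the theorem only claims no \emph{invariant} Baker domain, so you are attempting to prove strictly more.

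Unfortunately there is a genuine gap exactly where you flag ``the real care.'' The most serious problem is control of the critical values in the collar $\Gamma\setminus\Gamma'$. A critical point $z_c$ of $f=z-e^{\mu}$ satisfies $\mu'(z_c)e^{\mu(z_c)}=1$, hence $f(z_c)=z_c-1/\mu'(z_c)$. If $\re\mu(z_c)$ is substantially larger than $\log|z_c|$ --- which is unavoidable somewhere in the collar, since $\re\mu$ must climb from about $\log|z|$ on $\partial\Gamma$ to about $|z|$ on $\partial\Gamma'$ --- then $|e^{\mu(z_c)}|\gg|z_c|$, so $|\mu'(z_c)|\ll 1/|z_c|$ and $|f(z_c)|\geq|e^{\mu(z_c)}|-|z_c|\to\infty$. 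Arranging $\mu'$ to be \emph{zero-free} does not help; you need a \emph{lower} bound on $|\mu'|$ at every point where $\re\mu$ is intermediate, and a thin collar does not give this --- the forced total increase of $\re\mu$ across the collar only tells you $|\mu'|$ is large \emph{somewhere}, not that it is never tiny at the specific points where $\mu'e^{\mu}$ could hit $1$. So bounded critical values are not secured, and $f\in\mathcal{B}$ is not established. A secondary but real problem is the approximation step itself: the set $F=\overline{E}\cup\Gamma'$ on which you want to prescribe the two incompatible behaviours has complement equal to the open collar, which (for any channel surrounding $\R_+$) is either disconnected or not locally connected at $\infty$; Arakelyan/Carleman-type theorems then do not apply as stated, so already the existence of $\mu$ with both prescribed asymptotics is not justified. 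Your parenthetical dynamical variant is closer to the paper's actual mechanism, but as stated it conflates two things the paper must separate: the jump size is small ($|e^{h}|$ can be tiny where $h\approx 0$), so ``large jumps'' alone do not contradict Schwarz--Pick; the paper first forces $\partial U$ close to the orbit using points where $\re h$ is large, and only then compares the jump to that (small) boundary distance.
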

A meromorphic function with this property was constructed in~\cite{Ber10}.
The present  construction is based on similar ideas.
As in~\cite{Ber10}, a function $f$ satisfying the conclusion of the
theorem can be given explicitly.

Let $(r_k)$ be a sequence of real numbers
tending to $\infty$ and let $(n_k)$ be a sequence of positive 
integers satisfying $n_k\geq k$ for all $k\in\N$. Then 
\[
h(z) = \prod_{k=1}^\infty \left(1+\left(\frac{z}{r_k}\right)^{n_k} \right)
\]
defines an entire function~$h$.
Indeed, if $|z|\leq R$ and $k$ is so large that $r_k\geq 2R$, then
$|z/r_k|^{n_k}\leq 2^{-k}$, implying that the infinite product converges
locally uniformly.

For $k\geq 2$ we put $m_k= \sum_{j=1}^{k-1} n_j$.  We shall show that if 
\begin{equation}
\label{1b}
r_k\geq 2 r_{k-1}\geq 4 
\quad  \text{and}\quad
n_k\geq 20 r_k^2\exp\left(4r_k^{m_{k}}\right)
\end{equation}
for $k\geq 2$,
then $f(z)=z+e^{h(z)}$ has the required property.
\begin{ack}
I thank Alexandre Eremenko for useful comments.
\end{ack}
\section{Preliminaries}
\subsection{The hyberbolic metric}
We need some standard results about the hyperbolic metric which 
can be found in, e.g.,~\cite[Section I.4]{CarGam}.

We denote the open disk of radius $r$ around a point $c\in\C$ by
$D(c,r)$ and put $\D=D(0,1)$.
The density of the hyperbolic metric in a hyperbolic
domain $U$ is denoted by~$\lambda_U$, normalized such that
$\lambda_{\D}(z)=2/(1-|z|^2)$.
The hyperbolic metric is denoted by $\rho_U$. 
For $a,b\in U$ we thus have
\[
\rho_U(a,b)=\inf_\gamma\int_\gamma\lambda_U(z)|dz|,
\]
where the infimum is taken over all curves $\gamma$ that connect $a$ and~$b$.
Then~\cite[p.~11]{CarGam}
\begin{equation}\label{5a}
\rho_\D(0,z)=\log\frac{1+|z|}{1-|z|} \quad\text{for}\ z\in\D.
\end{equation}
It follows from Schwarz's lemma and the Koebe one quarter theorem that if
$U$ is simply connected, then~\cite[Theorem I.4.3]{CarGam}
\begin{equation}\label{5b}
\frac{1}{2\dist(z,\partial U)}
\leq \lambda_U(z) \leq
\frac{2}{\dist(z,\partial U)}
\end{equation}
for all $z\in U$. Here $\dist(z,\partial U)=\inf_{\zeta\in\partial U}|\zeta-z|$.

The following lemma is a simple consequence of~\eqref{5b}.
\begin{lemma}\label{lemma1}
Let $U$ be a simply connected hyperbolic domain, $a,b\in U$ and $c\in\C\backslash U$. 
Then
\[
\rho_U(a,b)\geq \frac{1}{2}\left|\log\left|\frac{b-c}{a-c}\right|\right|.
\]
\end{lemma}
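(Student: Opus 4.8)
The plan is to combine the lower bound for $\lambda_U$ in~\eqref{5b} with the fact that $c$ lies outside $U$, and then to estimate the resulting integral from below by the total variation of $\log|z-c|$ along a connecting curve.

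First I would observe that for every $z\in U$ the straight segment joining $z$ to $c$ must meet $\partial U$, since $z\in U$, $c\notin U$ and $U$ is open; hence $\dist(z,\partial U)\le|z-c|$. Substituting this into the left-hand inequality of~\eqref{5b} yields
\[
\lambda_U(z)\;\geq\;\frac{1}{2\dist(z,\partial U)}\;\geq\;\frac{1}{2|z-c|}\qquad\text{for all }z\in U.
\]
Next, let $\gamma$ be any (piecewise smooth) curve in $U$ connecting $a$ and $b$. Using the previous estimate,
\[
\int_\gamma\lambda_U(z)\,|dz|\;\geq\;\frac12\int_\gamma\frac{|dz|}{|z-c|}.
\]
To bound the last integral from below I would note that along $\gamma$ one has $|dz|/|z-c|\geq\bigl|d\log|z-c|\bigr|$, because $\log|z-c|=\re\log(z-c)$ and $\log(z-c)$ is a locally defined primitive of $1/(z-c)$, so the absolute value of its differential dominates that of its real part. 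The fundamental theorem of calculus then gives
\[
\int_\gamma\frac{|dz|}{|z-c|}\;\geq\;\left|\int_\gamma d\log|z-c|\right|\;=\;\bigl|\log|b-c|-\log|a-c|\bigr|\;=\;\left|\log\left|\frac{b-c}{a-c}\right|\right|.
\]
Combining the two displays and taking the infimum over all such $\gamma$ (which computes $\rho_U(a,b)$) gives the claimed inequality.

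I do not expect a serious obstacle here; the only point that deserves a line of care is the justification of $|dz|/|z-c|\geq\bigl|d\log|z-c|\bigr|$ together with the use of the fundamental theorem of calculus along $\gamma$ — one restricts to piecewise smooth curves, which is harmless since they suffice to realize the infimum defining $\rho_U$. Everything else is a direct substitution into~\eqref{5b}.
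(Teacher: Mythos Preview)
Your proof is correct and follows essentially the same route as the paper: both use the lower bound in~\eqref{5b} together with $\dist(z,\partial U)\le|z-c|$ to reduce to $\tfrac12\int_\gamma|dz|/|z-c|$, and then bound this by $\tfrac12\bigl|\log|b-c|-\log|a-c|\bigr|$. The only cosmetic difference is that the paper normalizes $c=0$ and invokes a global branch $L$ of $\log z$ on the simply connected domain $U$ before taking the real part, whereas you work directly with $\log|z-c|$, which is globally defined without appealing to simple connectivity a second time.
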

\begin{proof}
Without loss of generality we may assume that $c=0$.
Let $\gamma$ be a curve from $a$ to $b$ and let $L$ be a branch of
the logarithm defined in~$U$. Then~\eqref{5b} yields
\[
\begin{aligned}
\int_\gamma\lambda_U(z)|dz|
&\geq
\frac{1}{2} \int_\gamma \frac{|dz|}{\dist(z,\partial U)}
\geq
\frac{1}{2} \int_\gamma \frac{|dz|}{|z|}
\geq
\frac{1}{2} \left|\int_\gamma \frac{dz}{z}\right|
\\ &
=\frac12 \left|L(b)-L(a)\right|
\geq \frac12 \left|\re(L(b)-L(a))\right|
=
\frac{1}{2}\left|\log \left|\frac{b}{a}\right|\right| ,
\end{aligned}
\]
from which the conclusion follows.
\end{proof}
The next lemma follows easily from~\eqref{5a} and the triangle inequality.
\begin{lemma}\label{lemma2}
If $a,b\in D(c,r/2)$, then $\rho_{D(c,r)}(a,b)\leq 2\log 3$.
\end{lemma}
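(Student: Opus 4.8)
The plan is to reduce to the unit disk $\D$ by an affine change of variable and then combine \eqref{5a} with the triangle inequality.

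First I would normalize. The map $\phi(z)=(z-c)/r$ is a conformal bijection of $D(c,r)$ onto $\D$, and since the hyperbolic metric is conformally invariant (see \cite[Section~I.4]{CarGam}), we have $\rho_{D(c,r)}(a,b)=\rho_{\D}(\phi(a),\phi(b))$. The hypothesis $a,b\in D(c,r/2)$ translates into $|\phi(a)|<1/2$ and $|\phi(b)|<1/2$. Hence it suffices to prove that $\rho_{\D}(\alpha,\beta)\leq 2\log 3$ whenever $\alpha,\beta\in\D$ satisfy $|\alpha|\leq 1/2$ and $|\beta|\leq 1/2$.

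For this I would use the triangle inequality in the form $\rho_{\D}(\alpha,\beta)\leq \rho_{\D}(\alpha,0)+\rho_{\D}(0,\beta)$. By \eqref{5a} we have $\rho_{\D}(0,z)=\log\frac{1+|z|}{1-|z|}$, and since $t\mapsto \log\frac{1+t}{1-t}$ is increasing on $[0,1)$, the bound $|z|\leq 1/2$ gives $\rho_{\D}(0,z)\leq \log\frac{1+1/2}{1-1/2}=\log 3$. Applying this with $z=\alpha$ and with $z=\beta$ then yields $\rho_{\D}(\alpha,\beta)\leq 2\log 3$, as required.

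There is no genuine obstacle here; the argument is routine once one invokes the conformal invariance of $\rho$ and the elementary monotonicity of $t\mapsto\log\frac{1+t}{1-t}$. The only mild care needed is to use the affine normalizing map $z\mapsto(z-c)/r$ rather than some other automorphism of the disk, so that $D(c,r/2)$ is carried precisely onto $D(0,1/2)$ and the center $c$ onto $0$, which is what makes \eqref{5a} directly applicable.
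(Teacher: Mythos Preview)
Your proof is correct and follows exactly the route the paper indicates: normalize by the affine map $z\mapsto(z-c)/r$, then combine \eqref{5a} with the triangle inequality through the centre. There is nothing to add.
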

Finally we have the following form of Schwarz's lemma~\cite[Theorem I.4.3]{CarGam}.
\begin{lemma}\label{lemma3}
Let $U,V$ be hyperbolic domains,
$f\colon U\to V$ holomorphic and $a,b\in U$. Then
$\rho_V(f(a),f(b))\leq \rho_U(a,b)$.
\end{lemma}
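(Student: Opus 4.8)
The plan is to reduce the inequality to the classical Schwarz lemma on the disk by means of universal covering maps, working in infinitesimal form. Concretely, I would first prove that for holomorphic $f\colon U\to V$ one has
\[
\lambda_V(f(z))\,|f'(z)|\le\lambda_U(z)\qquad\text{for all }z\in U ,
\]
and then integrate: for any curve $\gamma$ from $a$ to $b$ in $U$ the image $f\circ\gamma$ is a curve from $f(a)$ to $f(b)$ in $V$, and by the change of variables $\int_{f\circ\gamma}\lambda_V(w)\,|dw|=\int_\gamma\lambda_V(f(z))\,|f'(z)|\,|dz|\le\int_\gamma\lambda_U(z)\,|dz|$, so that taking the infimum over all such $\gamma$ gives $\rho_V(f(a),f(b))\le\rho_U(a,b)$.

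The case $U=V=\D$ is the heart of the matter. Fix $z_0\in\D$ and write $\phi_a(z)=(a-z)/(1-\overline{a}z)$ for the involutive automorphism of $\D$ that interchanges $0$ and~$a$; one computes $\phi_a'(0)=|a|^2-1$ and $\phi_a'(a)=-1/(1-|a|^2)$. The function $G=\phi_{f(z_0)}\circ f\circ\phi_{z_0}$ maps $\D$ into $\D$ and fixes~$0$, so Schwarz's lemma gives $|G'(0)|\le1$; the chain rule rewrites this as $|f'(z_0)|/(1-|f(z_0)|^2)\le1/(1-|z_0|^2)$, i.e.\ $\lambda_\D(f(z_0))\,|f'(z_0)|\le\lambda_\D(z_0)$ in the normalization $\lambda_\D(z)=2/(1-|z|^2)$ fixed in the paper.

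For general hyperbolic domains $U$ and $V$ I would invoke the uniformization theorem to obtain holomorphic universal covering maps $\pi_U\colon\D\to U$ and $\pi_V\colon\D\to V$; by the definition of the hyperbolic metric on an arbitrary hyperbolic domain these are local isometries, which amounts to $\lambda_U(\pi_U(\zeta))\,|\pi_U'(\zeta)|=\lambda_\D(\zeta)$ and the analogous identity for~$V$. Since $\D$ is simply connected, the holomorphic map $f\circ\pi_U\colon\D\to V$ lifts through $\pi_V$ to a holomorphic $g\colon\D\to\D$ with $\pi_V\circ g=f\circ\pi_U$. Picking $\zeta\in\D$ with $\pi_U(\zeta)=z$ and differentiating this relation yields $\pi_V'(g(\zeta))\,g'(\zeta)=f'(z)\,\pi_U'(\zeta)$, and combining the covering identities with the disk case applied to $g$ gives
\[
\lambda_V(f(z))\,|f'(z)|=\frac{\lambda_\D(g(\zeta))}{|\pi_V'(g(\zeta))|}\,|f'(z)|=\frac{\lambda_\D(g(\zeta))\,|g'(\zeta)|}{|\pi_U'(\zeta)|}\le\frac{\lambda_\D(\zeta)}{|\pi_U'(\zeta)|}=\lambda_U(z),
\]
which is the required infinitesimal inequality (at points where $f'(z)=0$ it is trivial).

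I expect the only genuinely delicate point to be this reduction to the disk for non-simply-connected domains: one needs to know that the universal covering maps are local isometries for the hyperbolic metrics normalized as in the paper, and that the lifting criterion for maps into $\pi_V$ applies. Both are standard — indeed the first is essentially how $\lambda_U$ is defined when $U$ is not simply connected, consistently with~\eqref{5b} in the simply connected case — so once this is granted the argument is just Schwarz's lemma plus a change of variables. (Alternatively one could argue directly with distances: lift $a$ and $b$ to points $\tilde a,\tilde b\in\D$ realizing $\rho_U(a,b)$ as a $\rho_\D$-distance, apply the disk version of the lemma to $g$, and use that $g(\tilde a)$ and $g(\tilde b)$ lie over $f(a)$ and $f(b)$.)
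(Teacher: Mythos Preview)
Your argument is correct and is the standard proof of the Schwarz--Pick lemma for general hyperbolic domains: reduce to the disk via universal covers and invoke the classical Schwarz lemma, then integrate the infinitesimal inequality along curves. There is nothing to fix.

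As for comparison with the paper: the paper does not actually prove this lemma. It simply records it as ``the following form of Schwarz's lemma'' and cites \cite[Theorem~I.4.3]{CarGam}. So there is no alternative approach to compare against; what you have written is essentially the argument one finds in the cited reference.
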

Applying this lemma to $f(z)=z$ yields
\begin{equation}\label{2z}
\rho_V(a,b)\leq \rho_U(a,b)\quad  \text{if } U\subset V.
\end{equation}

\subsection{Some growth estimates}
We have to estimate the growth of $h$ on certain circles from above
and below.
For $k\geq 2$ and $|z|=r_k$ we have
\[
\begin{aligned}
\log |h(z)|
&\leq 
\sum_{j=1}^{k-1} \log\left(1+\left(\frac{r_k}{r_j}\right)^{n_j}\right)
+\log 2 
+\sum_{j=k+1}^\infty \log\left(1+\left(\frac{r_k}{r_j}\right)^{n_j}\right)\\
&\leq
\sum_{j=1}^{k-1} \log\left(1+\frac12 {r_k}^{n_j}\right)
+\log 2
+\sum_{j=k+1}^\infty \left(\frac{r_k}{r_j}\right)^{n_j}\\
&\leq 
\sum_{j=1}^{k-1} \log\left({r_k}^{n_j}\right)
+\log 2
+\sum_{j=k+1}^\infty 2^{-n_j}
%\\ &
\leq 
m_k\log r_k +2\log 2.
\end{aligned}
\]
Hence
\begin{equation}
\label{2a}
|h(z)|\leq 4 r_k^{m_{k}}
\quad\text{for}\ |z|=r_k
\end{equation}
and $k\geq 2$.

We put $s_k
=
(1+1/n_k)r_k$. For $t\in[0,2\pi]$ and $z=s_ke^{it}$ we have
\[
\begin{aligned}
h(z)
&=
\prod_{j=1}^{k-1} \left(1+\left(\frac{z}{r_j}\right)^{n_j}\right)
\cdot \left(1+\left(1+\frac{1}{n_k}\right)^{n_k} e^{in_kt}\right)
\cdot\prod_{j=k+1}^\infty \left(1+\left(\frac{z}{r_j}\right)^{n_j}\right)\\
&\sim 
\prod_{j=1}^{k-1} \left(\frac{z}{r_j}\right)^{n_j}
\left(1+e\cdot e^{in_kt}\right)
%\\ &
= 
\prod_{j=1}^{k-1} \left(\frac{s_k}{r_j}\right)^{n_j}
 e^{im_kt} \left(1+e\cdot e^{in_kt}\right)
\end{aligned}
\]
as $k\to\infty$. Putting
\begin{equation}
\label{2a1}
T_k
= \prod_{j=1}^{k-1} \left(\frac{s_k}{r_j}\right)^{n_j}
\end{equation}
we thus have
\begin{equation}
\label{2b}
h(s_ke^{it})\sim T_k  e^{im_kt} \left(1+e\cdot e^{in_kt}\right)
\end{equation}
as $k\to\infty$.

It is not difficult to see that
 for each $\varphi\in\R$ there exists $\theta=\theta(\varphi)\in[0,1]$
such that
$e^{2\pi i\varphi} \left(1+e\cdot e^{2\pi i \theta}\right)$ is positive.
For $\nu\in\{0,1,\dots,n_k-1\}$ we put 
\[
\theta_\nu =\theta(\nu m_k/n_k)
\quad\text{and} \quad
p_\nu
=
e^{2\pi i\nu m_k/n_k} \left(1+e\cdot e^{2\pi i \theta_\nu}\right).
\]
Then $p_\nu$ is positive and thus $p_\nu=|p_\nu| \geq e-1$.
Since $m_k/n_k\to 0$ by~\eqref{1b},
we deduce from~\eqref{2b}  that
\[
h(s_ke^{2\pi i(\nu+\theta_\nu)/n_k})
\sim 
T_k e^{2\pi i(\nu+\theta_\nu)m_k/n_k} \left(1+e\cdot e^{2\pi i (\nu+\theta_\nu)}\right)
= 
p_\nu T_k e^{2\pi i\theta_\nu m_k/n_k} 
\sim
p_\nu T_k
\]
Thus
\begin{equation}
\label{2c}
\re h(s_ke^{2\pi i(\nu+\theta_\nu)/n_k})
\geq T_k
\end{equation}
for large $k$ and all 
$\nu\in\{0,1,\dots,n_k-1\}$.
\section{Proof of  the theorem}
Let $f$ be as defined in the introduction and
suppose that $f$ has an invariant Baker domain~$U$. 
By a result of Baker~\cite{Baker1975}, $U$ is simply connected.
Take $z_0\in U$, connect $z_0$ and $f(z_0)$ by a curve $\gamma_0$ in $U$
and put $\gamma= \bigcup_{j=0}^\infty f^j(\gamma_0)$. Then $\gamma$
is a curve in $U$ connecting $z_0$ to~$\infty$.
As $\gamma_0$ is compact, there exists $K> 0$ such that 
$\rho(f(z),z)\leq K$ for all $z\in\gamma_0$.
Since every $z\in\gamma$ has the form $z=f^j(\zeta)$ for some
$\zeta\in\gamma_0$ and some $j\geq 0$,
Lemma~\ref{lemma3} yields
\begin{equation}
\label{3z}
\rho(f(z),z)\leq K\quad  \text{for} \  z\in\gamma.
\end{equation}
For large $k$ the curve $\gamma$ intersects the
circle $\{z\colon |z|=r_k\}$. Let
$z_k$ be a point of intersection.

We shall show first  that if $k$ is large enough,
then the disk $D(z_k,20r_k/n_k)$ 
is not contained in~$U$; that is,
\begin{equation}
\label{3y}
D(z_k,20r_k/n_k)\cap \partial U\neq \emptyset.
\end{equation}
In order to do so we assume that $D(z_k,20r_k/n_k)\subset U$.
We write $z_k=r_ke^{2\pi i t_k}$ with $t_k\in[0,1)$ and put
$\nu= [n_kt_k]$, where $[x]$ denotes the largest integer not greater
than~$x$. 
Thus $n_kt_k=\nu+\delta$ where
$\nu\in\{0,1,\dots,n_k-1\}$ and
$\delta\in[0,1)$.
Let 
\[
a_k=r_ke ^{(2\nu+1)\pi i/n_k}
\quad \text{and} \quad
b_k=s_ke^{2\pi i (\nu+\theta_\nu)/n_k}.
\]
Then 
\[
|a_k-z_k|=r_k\left|e^{(2\nu+1)\pi i/n_k-2\pi i t_k}-1\right|
=r_k\left|e^{(1-2\delta)\pi i/n_k}-1\right|
\sim \frac{|1-2\delta|\pi r_k}{n_k}
\]
and 
\[
\begin{aligned}
|b_k-z_k|
&\leq |b_k- r_ke^{2\pi i (\nu+\theta_\nu)/n_k}|
+|r_ke^{2\pi i (\nu+\theta_\nu)/n_k}-z_k|
\\ &
= 
s_k-r_k
+r_k\left|e^{2\pi i(\nu+\theta_\nu)/n_k-2\pi i t_k}-1\right|
\\ &
= \frac{r_k}{n_k}
+r_k\left|e^{2\pi i(\theta_\nu-\delta)/n_k}-1\right|
\\ &
\sim \frac{(1+2\pi|\theta_\nu-\delta|)r_k}{n_k},
\end{aligned}
\]
which implies that
\[
a_k\in D(z_k,10 r_k/n_k)
\quad\text{and}\quad 
b_k\in D(z_k,10 r_k/n_k)
\]
for large~$k$.
Lemma~\ref{lemma2} and~\eqref{2z} now yield
\begin{equation}
\label{3b}
\rho_U(a_k,b_k)\leq \rho_{D(z_k,20r_k/n_k)}(a_k,b_k)\leq 2\log 3.
\end{equation}
Since $h(a_k)=0$ by the definition of $h$ and $\re h(b_k)\geq T_k\geq s_k/r_1$
by~\eqref{2a1} and~\eqref{2c}, we have
\begin{equation}
\label{3c}
|f(a_k)|=|a_k+1|\leq r_k+1
\quad\text{and}\quad 
|f(b_k)|\geq 
e^{s_k/r_1}-s_k\geq s_k^2\geq r_k^2
\end{equation}
for large $k$.
Fix a point $c\in\partial U$.
Lemma~\ref{lemma1} and~\eqref{3c} imply that 
\begin{equation} \label{3d}
\rho_U(f(a_k),f(b_k))\geq 
\frac12 \log \left|\frac{f(b_k)-c}{f(a_k)-c}\right|
\geq 
\frac12 \log \frac{r_k^2 -|c|}{r_k+1+|c|}
\end{equation}
for large~$k$.
Now a  contradiction is obtained from Lemma~\ref{lemma3},
\eqref{3b} and~\eqref{3d}, provided $k$ is sufficiently large.
This contradiction shows that~\eqref{3y} holds for large~$k$.

Thus, for large $k$, there exists $c_k\in
D(z_k,20r_k/n_k)\cap \partial U$.
Lemma~\ref{lemma1}
now yields 
\[
\rho_U(f(z_k),z_k)
\geq 
\frac12 \log \left|\frac{f(z_k)-c_k}{z_k-c_k}\right|
= 
\frac12 \log \left|\frac{e^{h(z_k)}}{z_k-c_k}+1\right|.
\]
Since 
\[
\left|\frac{e^{h(z_k)}}{z_k-c_k}\right|
\geq
\frac{e^{- |h(z_k)|}}{|z_k-c_k|}
\geq 
\frac{n_k\exp\left(-4r_k^{m_k}\right)}{20r_k}
\geq r_k
\]
for $k\geq 2$ by~\eqref{1b} and~\eqref{2a}, 
we obtain
\[
\rho_U(f(z_k),z_k)
\geq \frac12 \log(r_k-1)
\]
for large $k$, contradicting~\eqref{3z}.\qed

\begin{rem}
Buff and R\"uckert~\cite{BufRuc} considered \emph{virtual immediate basins}
instead of invariant Baker domains. However, for functions for which
 all Baker domains are simply connected the two concepts coincide; cf.\ the discussion
in~\cite[p.~431]{Ber10} or~\cite[p.~4]{BufRuc}.
By the result of Baker~\cite{Baker1975} already quoted, this holds in particular
for entire functions. 
By a recent result of 
Bara\'nski, Fagella, Jarque and Karpi\'nska~\cite{BFJK}, it also holds 
for Newton maps of entire functions.
\end{rem}
\begin{rem}
The function $f$ constructed in the proof is the Newton function for
\[
g(z)=\exp\!\left(-\int_0^z e^{-h(t)}dt\right).
\]
Since $g$ has no zeros, $g$ has a direct singularity over~$0$; 
see~\cite[p.~289]{Nev} for this result,
as well as \cite{BerEre93,Six} for 
the terminology used here and below.
As $f$ has no invariant Baker domains, 
$g$ has no logarithmic singularity over~$0$ by
one of the results obtained by Buff and R\"uckert in the paper 
already mentioned in the introduction~\cite[Theorem~4.1]{BufRuc}.
Thus $g$ has a direct non-logarithmic singularity over~$0$.
This implies~\cite[Theorem~5]{BerEre08}
that $g$ has uncountably many direct non-logarithmic 
singularities over~$0$.
As $g$ has no critical points, a result of Sixsmith~\cite[Theorem~ 1.2]{Six}
yields that every neighborhood of any of these singularities contains a 
neighborhood of an indirect or logarithmic singularity of $g$
whose projection is different from~$0$.
Overall we see that the set of singularities of the
inverse of $g$ has a quite complicated structure.
\end{rem}


\begin{thebibliography}{99}
\bibitem{Baker1975} I.\ N.\ Baker,
The domains of normality of an entire function.
{\rm Ann.\ Acad.\ Sci.\ Fenn.\ Ser.\ A I Math.}
1 (1975), 277--283.
\bibitem{BFJK}
Krzysztof Bara\'nski, N\'uria Fagella, Xavier Jarque
and Bogus\l awa Karpi\'nska,
On the connectivity of the Julia sets of meromorphic functions.
Preprint, arXiv: 1206.6667.
\bibitem{Ber93}
Walter Bergweiler,
Iteration of meromorphic functions.
{\rm Bull.\ Amer.\ Math.\ Soc.\ (N.\ S.)} 29 (1993), 151--188.
\bibitem{Ber10}
Walter Bergweiler,
Newton's method and Baker domains.
J. Difference Equ. Appl. 16 (2010), 427--432.
\bibitem{BerDraLan}
Walter Bergweiler, David Drasin and James K.\ Langley,
Baker domains for Newton's method.
Ann.\ Inst.\ Fourier 57 (2007), 803--814.
\bibitem{BerEre93}
Walter Bergweiler and Alexandre Eremenko,
On the singularities of the inverse to a
meromorphic function of finite order.
{\rm Rev.\ Mat.\ Iberoamericana} 11 (1995), 355--373.
\bibitem{BerEre08}
Walter Bergweiler and Alexandre Eremenko,
Direct singularities and completely invariant domains of entire functions.
Illinois J.\ Math.\ 52 (2008), 243--259. 
\bibitem{BufRuc}
Xavier Buff and Johannes R\"uckert,
Virtual immediate basins of Newton maps and asymptotic values.
Int.\ Math.\ Res.\ Not.\  2006, 65498, 1--18.
\bibitem{CarGam}
Lennart Carleson and Theodore W.\ Gamelin, 
Complex Dynamics. 
Universitext: Tracts in Mathematics. Springer-Verlag, New York, 1993.
\bibitem{Nev}
Rolf Nevanlinna,
{\rm Eindeutige analytische Funktionen}.
Springer-Verlag, Berlin, G\"ottingen, Heidelberg, 1953.
\bibitem{Rip}
P.\ J.\ Rippon, Baker domains.
In ``Transcendental Dynamics and Complex Analysis''.
London Math.\ Soc.\ Lect.\ Note Ser.\ 348.
Edited by P.\ J.\ Rippon and G.\ M.\ Stallard,
Cambridge University Press, Cambridge, 2008, 371--395.
\bibitem{Six}
D.\ J.\ Sixsmith,
A new characterisation of the Eremenko-Lyubich class.
Preprint, arXiv: 1205.1976.
\end{thebibliography}
\end{document}